\documentclass{sn-jnl}
\usepackage{graphicx}%
\usepackage{multirow}%
\usepackage{amsmath,amssymb,amsfonts,}%
\usepackage{mathtools}
\usepackage{amsthm}%
\usepackage{mathrsfs}%
\usepackage[title]{appendix}%
\usepackage{xcolor}%
\usepackage{textcomp}%
\usepackage{manyfoot}%
\usepackage{booktabs}%
\usepackage{algorithm}%
\usepackage{algorithmicx}%
\usepackage{algpseudocode}%
\usepackage{listings}%
\usepackage{hyperref}
\usepackage{bm}
\usepackage{dsfont}
\usepackage[labelfont=bf,labelsep=space]{caption}
\usepackage{enumitem}
\usepackage{titlesec}
\usepackage{caption}
\usepackage{float}
\usepackage{subcaption}

\newtheorem*{theorem*}{Theorem}
\theoremstyle{thmstyleone}%
\newtheorem{theorem}{Theorem}

\newtheorem{proposition}[theorem]{Proposition}%
\newtheorem{corollary}[theorem]{Corollary}%
\newtheorem{lemma}[theorem]{Lemma}%

\theoremstyle{thmstyletwo}%
\newtheorem{example}{Example}%
\newtheorem{remark}{Remark}%

\theoremstyle{thmstylethree}%
\usepackage[backend=bibtex, giveninits=true]{biblatex} 
\AtBeginBibliography{%
}
\begin{document}
\nocite{*}
\newcommand{\R}{\mathbb{R}}
\newcommand{\N}{\mathbb{N}}
\newcommand{\So}{\mathcal{S}}
\newcommand{\Ne}{\mathcal{N}}
\newcommand{\M}{M^{+}(\R^n)}
\newcommand{\Linf}{L^{\infty}(\Om)}
\def\wpp{W^{1,p}(\Omega)}
\newcommand{\hs}{H^{s}(\R^{n})}
\def\O{\Omega}
\def\bO{\partial\O}
\newcommand{\smoo}{\mathcal{C}^{\infty}_{0}(\Om)}
\newcommand{\ti}[1]{\textit{#1}}
\newcommand{\e}{\epsilon}
\newcommand{\g}{\gamma}
\newcommand{\G}{\Gamma}
\newcommand{\al}{\alpha}
\newcommand{\be}{\beta}
\newcommand{\te}{\theta}
\newcommand{\se}{\subseteq}
\newcommand{\si}{\sigma}
\newcommand{\un}{u_{n}}
\newcommand{\vn}{v_{n}}
\newcommand{\vf}{\varphi}
\newcommand{\wc}{\rightharpoonup}
\newcommand{\phik}{\Phi^{k}(\Om)}
\newcommand{\la}{\lambda}
\newcommand{\dx}{\,dx}
\newcommand{\dha}{\,d\mathcal{H}^{N-1}}
\newcommand{\Div}{\mathrm{div}}
\newcommand{\supp}{\mathrm{supp}}
\newcommand{\iom}{\int_{\Omega}}
\newcommand{\plap}{\Delta_{p}}
\def\ih#1{\int_{\bO}#1\dha}
\def\io#1{\int_{\O}#1\dx}
\def\L#1{L^{#1}(\R^{n})}
\def\norma#1#2{\|#1\|_{\lower 4pt \hbox{$\scriptstyle #2$}}}
\newcommand{\cs}{\subset\joinrel\subset}
\newcommand{\eq}[2]{\begin{equation} \label{eq:#1} #2 \end{equation}}
\newcommand{\thm}[2]{\begin{theorem} \label{thm:#1} #2 \end{theorem}}
\newcommand{\prop}[2]{\begin{proposition} \label{prop:#1} #2 \end{proposition}}
\newcommand{\cor}[2]{\begin{corollary} \label{cor:#1} #2 \end{corollary}}
\newcommand{\lem}[2]{\begin{lemma} \label{lem:#1} #2 \end{lemma}}
\newcommand{\ex}[2]{\begin{example} \label{ex:#1} #2 \end{example}}
\newcommand{\rem}[2]{\begin{remark} \label{rem:#1} #2 \end{remark}}
\newcommand{\pf}[1]{\begin{proof}  #1 \end{proof}}
\newcommand{\case}[1]{\begin{cases}  #1 \end{cases}}
\newcommand{\splitme}[1]{\begin{split}  #1 \end{split}}
\newcommand{\eps}{\varepsilon}
\newcommand{\de}{\partial}
\def\HH{\mathcal{H}}
\def\ds{\displaystyle}

\title{A nonlinear problem related to optimal insulation}
\author{\fnm{Genival} \sur{da Silva}\footnote{email: gdasilva@tamusa.edu, website: \url{www.gdasilvajr.com}}}
\affil{\orgdiv{Department of Mathematics}, \orgname{Texas A\&M University - San Antonio}}


\abstract{
We study a generalization of the classical optimal insulation problem by replacing the standard Laplacian with the \ti{p-Laplacian}, leading to a \ti{p-Poisson equation} with Robin boundary conditions. We also investigate the corresponding eigenvalue problem. This work extends and complements some results from \cite{bu1,pietra1,pietra2} to the nonlinear setting.
}

\keywords{p-Laplacian, Robin boundary conditions, optimal insulation}


\pacs[MSC Classification]{35J25,35J92,49R05}

\maketitle
\section{Introduction}
In this work, we investigate a generalization of the classical optimal insulation problem. Broadly speaking, the classical problem seeks the optimal distribution of an insulating material around a fixed thermally conducting body.

Mathematically, the fixed conducting body is represented by an open, bounded set $\Omega \subset \mathbb{R}^{n}$ with Lipschitz boundary, and the insulating material is modeled by a set $\Sigma_{\eps} \subset \mathbb{R}^{n}$ defined by
\[
\Sigma_{\eps} = \{\sigma + t\nu(\sigma) \,:\, \sigma \in \partial\Omega,\, 0 \le t < \eps^{\frac1{p-1}} h(\sigma)\},
\]
where $\nu(\sigma)$ denotes the unit outward normal (which is well-defined since $\Omega$ is assumed to have a Lipschitz boundary), and $h:\bO\to \R$ is a bounded positive Lipschitz  function.

Set $\Omega_{\varepsilon} = \Omega \cup \Sigma_{\varepsilon}$. If $u(x)$ denotes the temperature at a point $x \in \Omega_{\varepsilon}$, and $f$ is a given source function, then $u\in H^{1}_{0}(\O_{\eps})$ minimizes the functional
\eq{energy}{
F_{\varepsilon}(u) = \frac{1}{2} \int_{\Omega} \left| D u \right|^{2} \, dx + \frac{\varepsilon}{2} \int_{\Sigma_{\varepsilon}} \left| D u \right|^{2} \, dx - \int_{\Omega} f u \, dx.
}
Equivalently, $u$ is a solution to the corresponding Euler–Lagrange equation:
\begin{equation}\label{eq:euler}
\begin{cases}
-\Delta u = f & \text{in } \Omega, \\
-\Delta u = 0 & \text{in } \Sigma_{\varepsilon}, \\
u = 0 & \text{on } \partial \Omega_{\varepsilon}, \\
\frac{\partial u^{-}}{\partial \nu} = \varepsilon \frac{\partial u^{+}}{\partial \nu} & \text{on } \partial \Omega.
\end{cases}
\end{equation}
where $u^{-}$ and $u^{+}$ denote the traces of $u$ in $\Omega$ and $\Sigma_{\varepsilon}$ respectively. 

The optimal insulation problem consists in studying the behavior of the solution $u$ as $\varepsilon \to 0$. In this context, the notion of $\Gamma$-convergence in the $L^{2}$ topology is particularly well-suited for analyzing the limiting behavior of the functionals $F_{\varepsilon}$ as $\varepsilon \to 0$.

In \cite{ca2,but1}, the following result is proved:
\begin{theorem*}The functionals $F_{\eps}$ defined by \eqref{eq:energy} $\Gamma$-converge in the $L^{2}$ topology to the functional
\[
F(u)=\frac{1}{2} \int_{\Omega} \left| D u \right|^{2} \, dx + \frac{1}{2} \int_{\bO} \frac{u^{2}}{h} \, d\HH^{N-1} - \int_{\Omega} f u \, dx.
\]
Consequently, the unique minimizer of $F$ satisfies the Euler--Lagrange equation:
\eq{}{
\begin{cases}
-\Delta u = f & \text{in } \Omega, \\
h\frac{\partial u}{\partial \nu} +u =0  & \text{on } \partial \Omega.
\end{cases}
}
\end{theorem*}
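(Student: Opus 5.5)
We prove the $\Gamma$-convergence statement by establishing the two standard inequalities in the $L^{2}$ topology (functions extended by zero outside $\Omega_\eps$), working in local coordinates $(\sigma,t)$ on the thin layer $\Sigma_\eps$. Equi-coercivity then gives convergence of minimizers, and the Euler--Lagrange equation follows by a first variation. Throughout, $p=2$ in the definition of $\Sigma_\eps$, so the thickness is $\eps h(\sigma)$. The linear term $-\int_\Omega fu$ is continuous in $L^{2}$, so it suffices to treat the Dirichlet part.

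\textbf{Liminf inequality.} Let $u_\eps\to u$ in $L^{2}$ with $\sup F_\eps(u_\eps)<\infty$. The bound on $\int_\Omega|Du_\eps|^2$ gives $u_\eps\wc u$ in $H^1(\Omega)$, so by lower semicontinuity $\int_\Omega |Du|^2\le\liminf\int_\Omega|Du_\eps|^2$. For the layer term, fix $\sigma\in\partial\Omega$ and use $u_\eps(\sigma+\eps h(\sigma)\nu)=0$. Then
\[
|u_\eps(\sigma)|\le\int_0^{\eps h}|\partial_t u_\eps|\,dt ,
\]
and Cauchy--Schwarz gives
\[
\frac{u_\eps(\sigma)^2}{\eps h(\sigma)}\le\int_0^{\eps h}|Du_\eps|^2\,dt .
\]
Multiply by $\eps$ and integrate over $\partial\Omega$, using that the Jacobian of $(\sigma,t)\mapsto\sigma+t\nu$ is $1+O(\eps)$. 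This yields
\[
\eps\int_{\Sigma_\eps}|Du_\eps|^2\ge(1-C\eps)\int_{\partial\Omega}\frac{u_\eps^2}{h}\,d\HH^{N-1}.
\]
Since $u_\eps\wc u$ in $H^1(\Omega)$, compactness of the trace $H^1(\Omega)\to L^2(\partial\Omega)$ gives convergence of the boundary term, and the liminf inequality follows.

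\textbf{Recovery sequence.} By density, take $u$ smooth up to the boundary and define
\[
u_\eps(\sigma+t\nu)=u(\sigma)\Bigl(1-\frac{t}{\eps h(\sigma)}\Bigr)\quad\text{on }\Sigma_\eps ,
\]
with $u_\eps=u$ in $\Omega$. Then $u_\eps\in H^1_0(\Omega_\eps)$ and $u_\eps\to u$ in $L^2$, since the layer has measure $O(\eps)$. The normal derivative is $-u(\sigma)/(\eps h)$, whose squared integral over the layer, times $\eps$, equals exactly $\int u^2/h$ up to a factor $1+O(\eps)$. Tangential derivatives are $O(1+t|\nabla h|/(\eps h))=O(1)$ because $h$ is Lipschitz and bounded below. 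Their contribution is therefore $\eps\cdot O(\eps)\to 0$. A diagonal argument extends the construction to general $u\in H^1(\Omega)$.

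\textbf{Minimizers and Euler--Lagrange equation.} The energy bound plus the Poincaré-type inequality (controlling $\|u\|_{L^2}$ by $\int|Du|^2+\int_{\partial\Omega}u^2/h$) gives equi-coercivity. Hence minimizers of $F_\eps$ converge to the minimizer of $F$, which is unique by strict convexity. Taking the first variation of $F$ in direction $\varphi\in H^1(\Omega)$ gives
\[
\int_\Omega Du\cdot D\varphi+\int_{\partial\Omega}\frac{u\varphi}{h}=\int_\Omega f\varphi ,
\]
the weak form of $-\Delta u=f$ with $h\,\partial_\nu u+u=0$.

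The main obstacle is the geometric setup. With only a Lipschitz boundary, $\nu$ exists merely almost everywhere, and the map $(\sigma,t)\mapsto\sigma+t\nu(\sigma)$ need not be injective or bi-Lipschitz. Consequently both the Jacobian estimate $1+O(\eps)$ and the tangential-gradient bound in the recovery sequence need justification. I would first carry out the proof for $C^{1,1}$ boundaries, where the tubular map is a diffeomorphism. For Lipschitz $\Omega$, I would try to approximate by smooth domains, or replace $\nu$ by a smooth transversal field, checking that the limit energy is unchanged.
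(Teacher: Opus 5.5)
The paper does not prove this theorem. It quotes it from \cite{ca2,but1} as background, so there is no in-paper argument to compare yours with.

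Your proof is the standard fibre-by-fibre argument for thin-layer limits, and it is correct when $\bO$ is of class $C^{1,1}$. In that case $(\sigma,t)\mapsto\sigma+t\nu(\sigma)$ is bi-Lipschitz on $\bO\times[0,\delta)$ with Jacobian $1+O(t)$, $t$ is the distance to $\bO$, and $|Du|^2$ splits orthogonally into normal and tangential parts. That is all your liminf and recovery computations use. Only routine points need filling in:
\begin{itemize}
\item In the liminf, pass to a subsequence with $\liminf F_\eps(u_\eps)<\infty$ instead of assuming $\sup_\eps F_\eps(u_\eps)<\infty$.
\item Rather than evaluating $u_\eps$ pointwise on fibres, prove
\[
\int_{\bO}\frac{v^{2}}{h}\dha\le(1+C\eps)\,\eps\int_{\Sigma_\eps}|Dv|^{2}\dx
\]
for $v\in C^\infty_c(\O_\eps)$ and extend it to $H^1_0(\O_\eps)$ by density.
\item Since you extend by zero, compactness in $L^2(\R^N)$ also needs $\int_{\Sigma_\eps}u_\eps^2\dx\le C\eps^2\int_{\Sigma_\eps}|Du_\eps|^2\dx\le C\eps$. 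This follows from the same fibre argument.
\end{itemize}

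You rightly flag the Lipschitz case, but your proposed remedies do not settle it.
\begin{itemize}
\item Replacing $\nu$ by a smooth transversal unit field $V$ changes the layer: its normal thickness becomes $\eps h\,(V\cdot\nu)$. The $\Gamma$-limit then has boundary term $\frac12\int_{\bO}u^{2}/(h\,V\cdot\nu)\dha$, which is not the stated one unless $V=\nu$.
\item Approximation by smooth domains would need estimates uniform in the approximation, and these fail exactly where $\nu$ jumps.
\item At a reentrant corner the normal segments of the two adjacent faces cross. For $\sigma$ within $O(\eps)$ of the corner, the endpoint $\sigma+\eps h(\sigma)\nu(\sigma)$ is an interior point of $\O_\eps$, so $u_\eps$ need not vanish there, and the change of variables counts the overlap twice.
\item At a convex corner the lateral sides of the layers lie in $\partial\O_\eps$. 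Your recovery profile $u(\sigma)(1-t/(\eps h(\sigma)))$ extends up to the corner, so it is not in $H^1_0(\O_\eps)$.
\end{itemize}

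For piecewise $C^{1,1}$ boundaries this can be repaired by localization. Apply the fibre estimate only for $\sigma$ at distance at least $\delta$ from the singular set $S$, let $\eps\to0$ and then $\delta\to0$. In the recovery sequence, multiply the profile by a logarithmic cutoff vanishing on an $O(\eps)$-neighbourhood of $S$; its Dirichlet energy tends to zero because $S$ has dimension at most $N-2$.

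For a general Lipschitz boundary, where $\nu$ may be discontinuous on a dense set, $\Sigma_\eps$ is not a genuine layer. The statement should be read under $C^{1,1}$-type regularity, which is precisely the setting in which your proof is complete.
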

\begin{figure}
\centering
    \includegraphics[width=.6\linewidth]{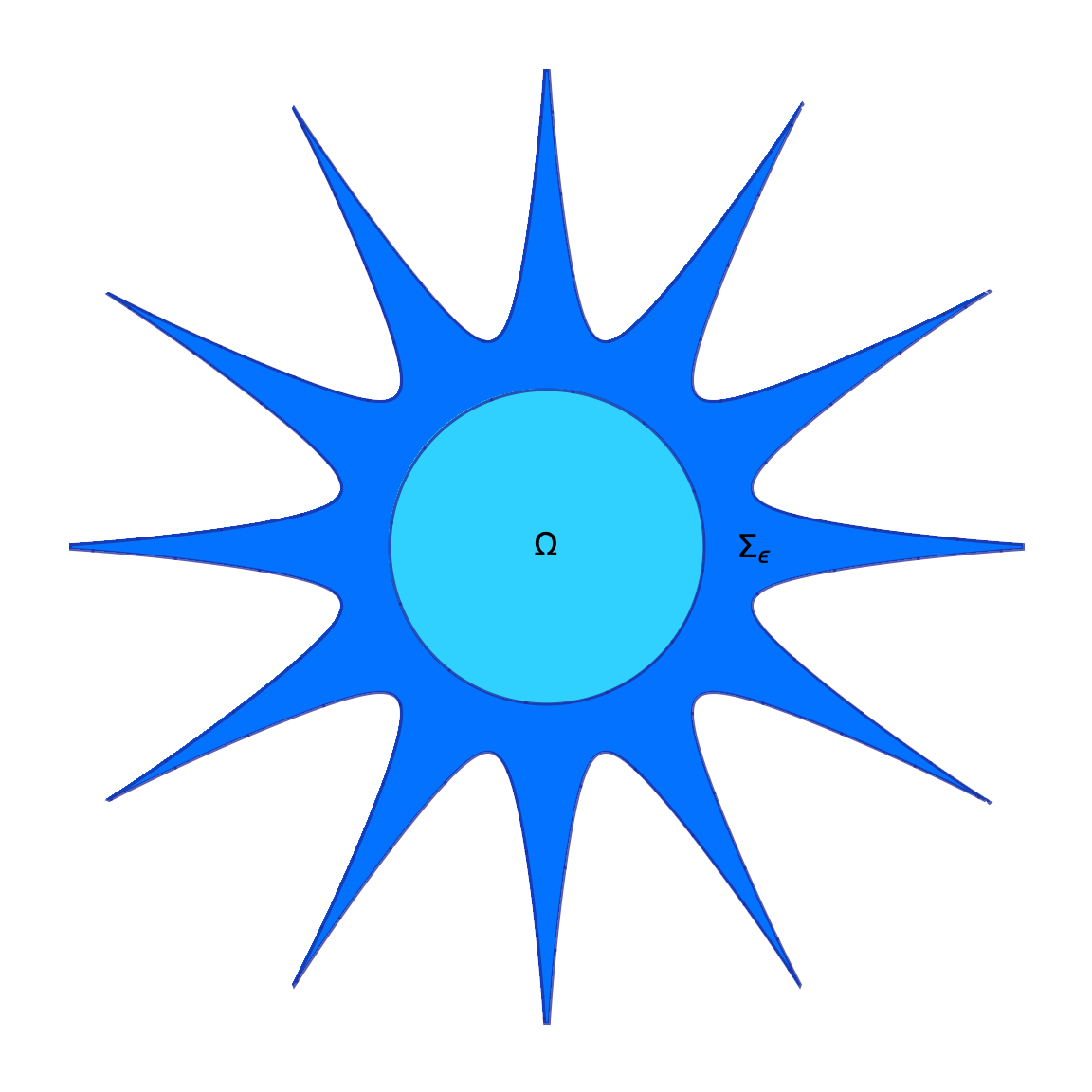}
\caption{Representation of  disk $\O\se\R^{2}$ with insulator described by the set $\Sigma_{\e}$. The function $h$ describing the contour is not optimal for $F_{h}$ if $p=2$.}
\end{figure}
In this work, we plan to discuss a generalized version of the functional $F$, namely:
\[
F_{h}(u)=\frac{1}{p}\iom \left| D u\right|^{p}dx+\frac{1}{p}\ih{\frac{|u|^{p}}{h^{p-1}}}-\iom fu
\]
where $p>1$. For a fixed insulation profile we first take $h:\partial\Omega\to(0,\infty)$ to be bounded and Lipschitz, so that the boundary term and the associated Robin condition have their usual pointwise meaning. In the optimization problem below, however, the admissible class is enlarged to measurable nonnegative functions. In that case the boundary term is understood in the extended-valued sense described below. The associated Euler-Lagrange equation is given by
\eq{el}{
\begin{cases}
-\Delta_{p} u = f & \text{in } \Omega, \\
h^{p-1}|Du|^{p-2}\frac{\partial u}{\partial \nu} +|u|^{p-2}u =0  & \text{on } \partial \Omega.
\end{cases}
}

With the thickness scaling $r_\varepsilon=\varepsilon^{1/(p-1)}$ used in the definition of $\Sigma_\varepsilon$, the functional $F_h$ is also related to the optimal insulation problem through $\Gamma$-convergence. More precisely, the relevant ratio satisfies $\varepsilon/r_\varepsilon^{p-1}=1$, and the limiting boundary coefficient is therefore finite and nonzero. With this scaling, $F_h$ arises as the $\Gamma$-limit of the family of functionals (see \cite{but1})
\[
F_{h}^{\eps}(u)=\frac{1}{p}\iom \left| D u\right|^{p}dx+\frac{\e}{p}\int_{\Sigma_{\eps}} \left| D u\right|^{p}dx-\iom fu\dx,
\]
which coincides with \eqref{eq:energy} when $p=2$.

When \( p \neq 2 \), the \( p \)-Laplacian still appears in various physical models, such as image denoising \cite{img}, sandpile modeling \cite{sandpile}, and modeling of non-Newtonian fluids \cite{flu}. However, it is no longer directly related to heat transfer. Nonetheless, the mathematical analysis of \( F_h \) remains of interest even in the case \( p \neq 2 \).

Our first result addresses the following question:

\vspace{2mm}

\textbf{Question:} What is the optimal pair  \( (u,h) \) that minimizes the value of \( F_h(u) \), where \( u \in W^{1,p}(\Omega) \) and \( h \) has fixed content, i.e., \( \int_{\partial\Omega} h\,d\HH^{N-1} = m \)?

\vspace{2mm}

In other words, we aim to analyze the following double minimization problem:
\eq{main}{
\min\limits_{h\in\HH_{m}}\min\limits_{ u\in \wpp}\left\{F_{h}(u)\right\},
}
where 
\[
\HH_m=\left\{h:\partial\O\to\R\hbox{ measurable, }h\ge0,\ \int_{\partial\O}h\,d\HH^{N-1}=m\right\}.
\]
For $h\in\HH_m$ we interpret
\[
\frac{|u|^p}{h^{p-1}}=
\begin{cases}
|u|^p h^{1-p},& h>0,\\
0,& h=0\ \text{and }u=0,\\
+\infty,& h=0\ \text{and }u\ne0,
\end{cases}
\qquad\text{on }\partial\Omega.
\]
Thus $F_h$ is an extended-valued functional on $W^{1,p}(\Omega)$. This convention is the natural lower-semicontinuous interpretation of the boundary energy when the insulation is allowed to vanish.
In the case \( p = 2 \), this question was addressed in \cite{bu1}. The present paper can be seen as a natural generalization of that work to the case \( p \neq 2 \). In this setting, the problem becomes nonlinear, and certain adjustments are required due to the presence of the nonlinearity \( |Du|^{p-2} \) in the definition of the \( p \)-Laplacian.

The principal new point of the paper is the isoperimetric estimate in Theorem~\ref{thm:54}. Although its level-set strategy is inspired by the linear argument, the nonlinear flux $|Du|^{p-2}Du$ changes the power structure in the coarea--Hölder step and requires a genuinely $p$-dependent estimate. 

In the discussion above, the set \( \Omega \) is assumed to be fixed. It is natural to ask how the minimum of \( F_h \) behaves when both \( h \) and \( \Omega \) are allowed to vary in an appropriate sense. Our second  result below addresses this scenario.  More precisely, we will show that among all Lipschitz domains with fixed volume, the ball minimizes the value of \( F_h \). This result constitutes a type of isoperimetric inequality and extends the work in \cite[Corollary~5.1]{pietra1}, where the linear case is considered.
\begin{center}
\begin{figure}
    \centering
    \begin{subfigure}[t]{0.5\textwidth}
        \centering
        \includegraphics[height=1in]{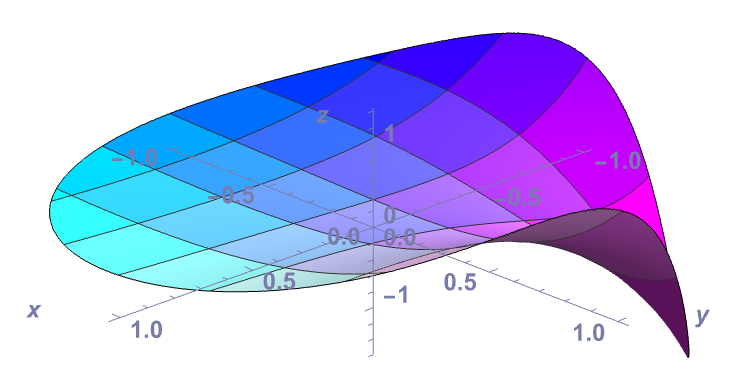}
    \end{subfigure}%
    ~ 
    \begin{subfigure}[t]{0.5\textwidth}
        \centering
        \includegraphics[height=1in]{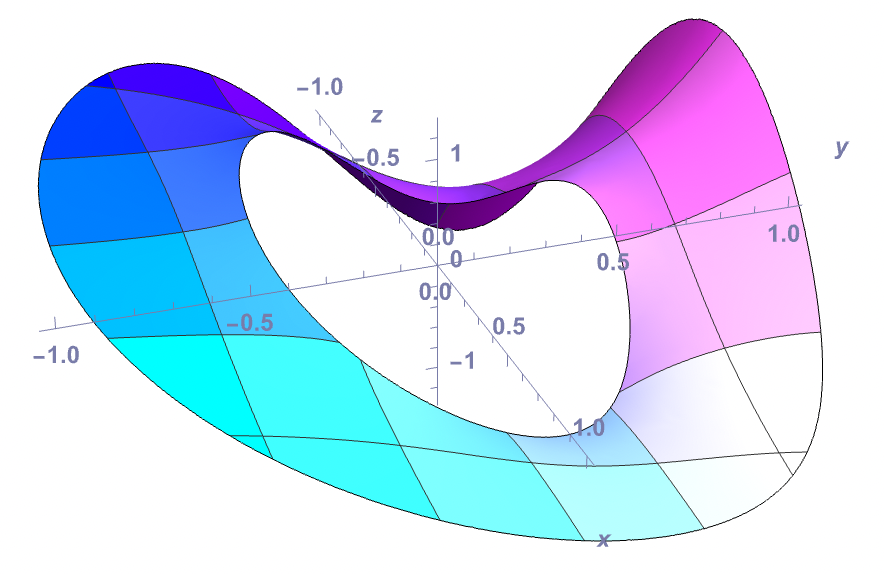}
    \end{subfigure}
   \caption{Graph of the solution \( u(x, y) \) to problem \eqref{eq:10} with \( p = 2 \) and \( h(x) = e^{x - y} \). 
    Left: \( \Omega = B_1 \subset \mathbb{R}^2 \); 
    Right: \( \Omega = B_1 \setminus \overline{B_{1/2}} \).}
\end{figure} 
In the last section of this paper, we study the eigenvalue problem:
\end{center}
\eq{10}{
\begin{cases}
-\Delta_{p} u = \la |u|^{p-2}u & \text{in } \Omega, \\
h^{p-1}|Du|^{p-2}\frac{\partial u}{\partial \nu} +|u|^{p-2}u =0  & \text{on } \partial \Omega.
\end{cases}
}

Using the direct method in the calculus of variations, it is straightforward to show that \eqref{eq:10} admits a solution \( u \in W^{1,p}(\Omega) \). Specifically, define the functional
\[
J_h(u) = \frac{\displaystyle\int_{\Omega} \left| \nabla u \right|^p\, dx + \int_{\partial \Omega} \frac{|u|^p}{h^{p-1}}\, d\mathcal{H}^{n-1}}{\displaystyle\int_{\Omega} |u|^p\, dx},
\]
then any minimizer of
\[
\min\left\{ J_h(u) : u \in W^{1,p}(\Omega),\, u \neq 0 \right\}
\]
solves \eqref{eq:10}.

Our second result concerns the solution of the double minimization problem:
\begin{equation}\label{eq:main2}
\min\limits_{h \in \mathcal{H}_m} \ \min\limits_{u \in W^{1,p}(\Omega)} J_h(u),
\end{equation}
and our final result is the analysis of the same double minimization problem when we also allow the Lipschitz domain \( \Omega \subset \mathbb{R}^n \) to vary. That is, we consider the following minimization problem:
\begin{equation}\label{eq:main3}
\min\limits_{|\Omega| \le 1} \ \min\limits_{h \in \mathcal{H}_m} \ \min\limits_{u \in W^{1,p}(\Omega)} J_h(u).
\end{equation}

\subsection*{Notation \& Assumptions}
\begin{itemize}
\item[-] $\O\se \R^{N}$ is a bounded set with Lipschitz boundary.
\item[-] The space $\wpp$ denotes the usual Sobolev space.
\item[-] For $q>1$, $q'$ denotes the Hölder conjugate, i.e. $\frac{1}{q}+\frac{1}{q'}=1$. When $1<q<N$, $q^{*}=\frac{qN}{N-q}$ denotes the Sobolev conjugate. 
\item[-] The letter $C$ will always denote a positive constant which may vary from place to place.
\item[-] The Lebesgue measure of a set $A\subseteq \R^{n}$ is denoted by $|A|$.
\item[-] The $N-1$ Hausdorff measure of a set $A\subseteq \R^{n}$ is denoted by $\HH^{N-1}(A)$.
\item[-] The symbol $\rightharpoonup$ denotes weak convergence. 
\end{itemize}
\section{Solution to the double minimization problem \eqref{eq:main}}
The following lemma will be used in the proof below.
\lem{poin}{(Poincaré inequality) Let $u\in \wpp$. Then there exists a constant $C>0$ such that
\eq{0}{
\io{|u|^{p}}\le C\left[ \iom \left| D u\right|^{p}dx+\left(\ih{|u|}\right)^{p}\right].
}
}
\pf{
Suppose \eqref{eq:0} is false. Then there is a sequence $\un\in\wpp$ such that
\[
\io{|\un|^{p}}> n\left[ \iom \left| D \un\right|^{p}dx+\left(\ih{|\un|}\right)^{p}\right].
\]
Without loss of generality we may assume 
\[
\io{|\un|^{p}}=1.
\]
Hence, up to a subsequence, $\un\wc u$ in $\wpp$, with $\io{|u|^{p}}=1$.

In particular,
\[
\iom \left| D \un\right|^{p}dx+\left(\ih{|\un|}\right)^{p} \to 0
\]
when $n\to+\infty$.  But since 
\[
\iom \left| D \un\right|^{p}dx\to 0\text{ and } \left(\ih{|\un|}\right)^{p} \to 0
\]
one must have $D\un\to Du=0$ strongly in $L^{p}(\O)$, and  $\un\to u=0$ strongly in $L^{p}(\bO)$. Hence $u\equiv 0$, a contradiction.
}

\thm{1}{Assume that $\O$ is connected, $f\in L^{p'}(\Omega)$, $f\ge0$, and $f\not\equiv0$. Then the minimization problem \eqref{eq:main} admits a unique solution. In particular, if $\O=B_{R}$ and $f=1$, then the optimal solution $h(x)$ is constant, given by
\[
h(x)=\frac{m}{N \omega_{N} R^{N-1}}.
\]
and does not depend on $p$.
}
\pf{
Note that given $u \in L^{p}(\bO)$ with $u \not\equiv 0$, the minimization problem
\[
\min\left\{\ih{\frac{|u|^p}{h^{p-1}}}\ :\ h\in \HH_{m}\right\}
\]
admits a unique solution. Indeed, define
\[
\hat{h}=m\frac{|u|}{\left(\ih{|u|}\right)}.
\]
Then, by Hölder's inequality,
\[
\left(\ih{|u|}\right)^{p}\le \left(\ih{\frac{|u|^{p}}{h^{p-1}}}\right)\left(\ih h\right)^{p-1},
\]
which implies
\[
\ih{\frac{|u|^{p}}{h^{p-1}}}\ge \frac{1}{m^{p-1}}\left(\ih{|u|}\right)^{p}=\ih{\frac{|u|^{p}}{\hat{h}^{p-1}}}.
\]
Thus, $\hat{h}$ is a minimizer. Uniqueness follows from the strict convexity of the functional $G(h)=\ih{\frac{|u|^p}{h^{p-1}}}$.

It follows that the minimization problem \eqref{eq:main} is equivalent to
\eq{3}{\min\left\{\frac{1}{p}\iom \left| D u\right|^{p}dx+\frac{1}{m^{p-1}p}\left(\ih{|u|}\right)^{p}-\iom fu \ :\ u\in \wpp \right\}.}
Notice that the functional
\[
R(u)=\frac{1}{p}\iom \left| D u\right|^{p}dx+\frac{1}{m^{p-1}p}\left(\ih{|u|}\right)^{p}-\iom fu 
\]
is coercive by Lemma \ref{lem:poin}, albeit not differentiable. Additionally, the third term is trivially convex. 

Existence follows from Lemma~\ref{lem:poin}, the direct method, and the weak lower semicontinuity of the two convex terms in \eqref{eq:3}. We next prove uniqueness under the stated sign assumption on $f$.

First observe that every minimizer may be chosen nonnegative. Indeed, for every $u\in W^{1,p}(\Omega)$,
\[
|D|u||=|Du|\quad\text{a.e. in }\Omega,\qquad
\int_{\partial\Omega}|\,|u|\,|\,d\HH^{N-1}=\int_{\partial\Omega}|u|\,d\HH^{N-1},
\]
while, since $f\ge0$,
\[
-\int_\Omega f|u|\,dx\le -\int_\Omega fu\,dx.
\]
Thus $R(|u|)\le R(u)$. If $u$ is a minimizer, equality must hold, hence
\[
\int_\Omega f(|u|-u)\,dx=0.
\]
Moreover, $|u|$ is itself a minimizer and hence satisfies $-\Delta_p |u|=f$ in the weak sense in $\Omega$. The strong maximum principle gives $|u|>0$ in $\Omega$. Thus $u$ has a constant sign in the connected set $\Omega$; the identity $\int_\Omega f(|u|-u)\,dx=0$ and the assumption $f\ge0$, $f\not\equiv0$ exclude the negative sign. Hence every minimizer is nonnegative (indeed positive in $\Omega$).

Let $u$ and $v$ be two minimizers. By convexity, $(u+v)/2$ is also a minimizer. Equality in the strictly convex gradient term gives
\[
Du=Dv\quad\text{a.e. in }\Omega,
\]
and, because $\Omega$ is connected, $u-v=C$ for some constant $C$. Since $u,v\ge0$, the boundary term reduces to
\[
\left(\int_{\partial\Omega}u\,d\HH^{N-1}\right)^p
\quad\text{and}\quad
\left(\int_{\partial\Omega}v\,d\HH^{N-1}\right)^p.
\]
Equality in the convexity inequality for the boundary term therefore implies
\[
\int_{\partial\Omega}u\,d\HH^{N-1}
=
\int_{\partial\Omega}v\,d\HH^{N-1}.
\]
Using $u-v=C$ gives $C\,\HH^{N-1}(\partial\Omega)=0$, hence $C=0$ and $u=v$. The corresponding optimal profile
\[
h=m\frac{u}{\int_{\partial\Omega}u\,d\HH^{N-1}}
\]
is then unique as well.

Let us now assume that $\O=B_{R}$ and $f=1$. A straightforward computation shows that the radial function
\eq{bsol}{
u(x)=\frac{R^{p'}-|x|^{p'}}{p' N^{\frac1{p-1}}}
+\frac{m}{\omega_{N}N^{p'}R^{N-p'}}
}
is the unique solution to the Euler-Lagrange equation associated to \eqref{eq:3},  which reads
\[
\begin{cases}
-\Delta_{p} u=1\quad\mbox{in }B_{R},\\
0\in m^{p-1}|Du|^{p-2}\ds\frac{\partial u}{\partial\nu}+\phi(u)\left(\int_{\partial B_{R}}u\, d\mathcal{H}^{n-1}\right)^{p-1}\quad\mbox{on }\partial B_{R},\\
\end{cases}
\]
where $\phi(u)$ is the subdifferential  of the real valued function $|t|,\, t=\phi(u)$. 

Note that the appearance of the inclusion $0\in \ldots$ in the Euler-Lagrange equation is due to the non differentiability of the functional \[u\mapsto \ih{|u|}.\]

We conclude that when $\O=B_{R}$ we have
\[
h(x)=m\frac{|u(x)|}{\left(\ih{|u|}\right)}=\frac{m}{N \omega_{N} R^{N-1}}.
\]
}
\begin{figure}
\centering
    \includegraphics[width=.4\linewidth]{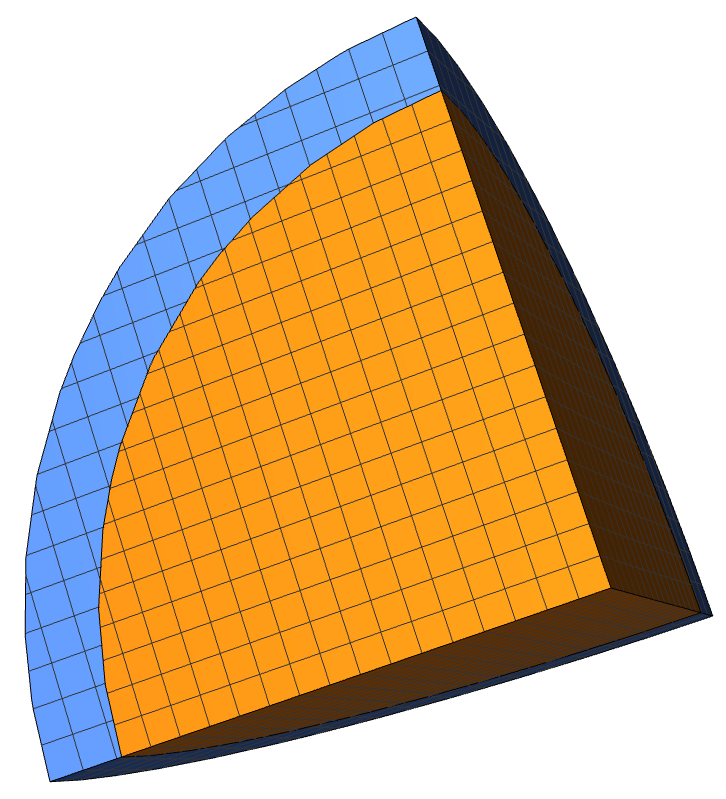}
\caption{Representation of $B_{1}$ in $\R^{3}$ with insulator material of constant thickness $h(x)=\frac{1}{4\pi}$.}
\end{figure}

For completeness, when $h>0$ a.e. on $\partial\Omega$ and the boundary energy is finite, by a weak solution of \eqref{eq:el} we mean a function $u\in W^{1,p}(\Omega)$ such that
\[
\int_\Omega |Du|^{p-2}Du\cdot D\varphi\,dx
+\int_{\partial\Omega}\frac{|u|^{p-2}u}{h^{p-1}}\varphi\,d\HH^{N-1}
=\int_\Omega f\varphi\,dx
\]
for every admissible $\varphi\in W^{1,p}(\Omega)$ for which the boundary integral is finite. If $h$ vanishes, the variational formulation is understood through the extended-valued functional above; in particular, finite energy forces the trace of $u$ to vanish a.e. on $\{h=0\}$, and variations are taken within this effective domain.

It is possible to obtain a closed-form expression for \[E(h)=\min_{ u\in \wpp} F_{h}(u). \] Specifically, if \( u \) is a solution of \eqref{eq:main}, then by taking \( u \) as a test function in the variational identity above, one obtains
\[
E(h) = \frac{1 - p}{p} \int_\Omega f u.
\]
In particular, when \( f \equiv 1 \), minimizing \( E(h) \) is equivalent to maximizing the average value of \( u \) over \( \Omega \). In the linear case, $p=2$, the solution $u$ is a model for temperature, and in this case, the solution maximizes the average temperature in $\O$ (see \cite{bu1} for more).

A natural question is: what is the optimal domain for the minimization problem \eqref{eq:main}? That is, if we allow the domain \( \Omega \) to vary among all Lipschitz domains with fixed volume, which one minimizes \eqref{eq:main}? This problem can be seen as a type of isoperimetric inequality and remains open for \( p \neq 2 \) (it has been resolved in the case \( p = 2 \); see \cite{pietra1}). The following theorem answers this question for every $p>1$.

\thm{54}{
Suppose $p>1$, $f\equiv 1$, $m>0$ fixed, and let $(u,h)$ be the solution pair obtained in Theorem \ref{thm:1}. Then
\[
\iom{u}\, \dx\le \frac{1}{N^{p'}\omega_{N}^{\frac{p'}{N}}}\left(\frac{N(p-1)}{p+N(p-1)} |\O|^{\frac{p+N(p-1)}{N(p-1)}}+ |\O|^{\frac{p}{N(p-1)}}m\right),
\]
and equality holds if $\O$ is a ball. 
}
\pf{
First notice that $u\ge0$ a.e. in $\Omega$. This follows directly from the variational problem and does not require a boundary point lemma: replacing $u$ by $|u|$ leaves both the gradient term and the boundary term unchanged, while
\[
-\int_\Omega |u|\,dx\le -\int_\Omega u\,dx
\]
because $f\equiv1$. Hence an optimal state can be taken nonnegative; by the uniqueness established in Theorem~\ref{thm:1}, the minimizer itself is nonnegative. The strong maximum principle for
$-\Delta_pu=1$ then gives
\[
u>0\quad\text{in }\Omega.
\]

For $t>0$, define
\[
\splitme{
U_{t}&=\{x\in\O;u(x)>t\},\\
\partial U_{t}^{\textrm{int}}&=\partial U_{t}\cap \O,\text{ and }\partial U_{t}^{\textrm{ext}}=\partial U_{t}\cap \bO,\\
\mu(t)&=|U_{t}|,\\
P(t)&=\text{Per}(U_{t}).
}
\]
Now, given $t,k>0$, consider the test function $\vf$ in the variational formulation of \eqref{eq:el}, defined as
\[
\vf=\case{0,& \text{if } 0<u<t, \\
u-t,& \text{if } t<u<t+k, \\
k,& \text{if } u>t+k, 
}
\]
We obtain
\[
\splitme{
&\int_{U_{t}\setminus U_{t+k}} |Du|^{p}\dx +k \int_{\partial U_{t+k}^{\textrm{ext}}} \frac{|u|^{p-2}u}{h^{p-1}}\dha\\&+\int_{\partial U_{t}^{\textrm{ext}}\setminus\partial U_{t+k}^{\textrm{ext}}} \frac{|u|^{p-2}u}{h^{p-1}}(u-t)\dha= \int_{U_{t}\setminus U_{t+k}} (u-t)\dx + k\int_{U_{t+k}} \dx 
}
\]
Dividing both sides by $k$ and letting $k\to 0$ we have
\[
\mu(t)=\int_{\partial U_{t}^{\textrm{int}}} |Du|^{p-1}\dha+\int_{\partial U_{t}^{\textrm{ext}}} \frac{|u|^{p-2}u}{h^{p-1}}\dha.
\]
If we set 
\[
g(x)=\case{ |Du|^{p-1},& \text{if } x\in \partial U_{t}^{\textrm{int}},\\ \frac{|u|^{p-2}u}{h^{p-1}},& \text{if } x\in \partial U_{t}^{\textrm{ext}}},
\]
then the above expression becomes
\[
\mu(t)=\int_{\partial U_{t}} g\dha.
\]
Applying Hölder’s inequality, we obtain
\eq{20}{
P(t)^p
\le
\left(\int_{\partial U_t} g\,d\HH^{N-1}\right)
\left(\int_{\partial U_t}g^{-\frac1{p-1}}\,d\HH^{N-1}\right)^{p-1}.
}
Since $\int_{\partial U_t}g\,d\HH^{N-1}=\mu(t)$, taking the power $1/(p-1)$ in \eqref{eq:20} gives
\[
P(t)^{p'}
\le
\mu(t)^{\frac1{p-1}}
\int_{\partial U_t}g^{-\frac1{p-1}}\,d\HH^{N-1}.
\]
By the coarea formula on $\partial U_t^{\mathrm{int}}$ and by the definition of $g$ on $\partial U_t^{\mathrm{ext}}$,
\[
\int_{\partial U_t}g^{-\frac1{p-1}}\,d\HH^{N-1}
=
-\mu'(t)+\int_{\partial U_t^{\mathrm{ext}}}\frac{h}{u}\,d\HH^{N-1}.
\]
Therefore
\[
P(t)^{p'}
\le
\mu(t)^{\frac1{p-1}}
\left(
-\mu'(t)+\int_{\partial U_t^{\mathrm{ext}}}\frac{h}{u}\,d\HH^{N-1}
\right).
\]
This application of Hölder's inequality uses the conjugate exponents $p$ and $p'=p/(p-1)$ and is valid for every $p>1$.
Recall the isoperimetric inequality:
\eq{}{
\left(\frac{\mu(t)}{\omega_{N}}\right)^{N-1}\le \left(\frac{P(t)}{N\omega_{N}}\right)^{N}.
}
Combining this with the preceding estimate, we obtain
\eq{pp}{
N^{p'}\omega_{N}^{\frac{p'}{N}}
\mu(t)^{1-\frac{p'}{N}}
\le
-\mu'(t)+ \int_{\partial U_{t}^{\textrm{ext}}} \frac{h}{u}\dha,
}
or equivalently,
\eq{}{
\mu(t)\le\frac{1}{N^{p'}\omega_{N}^{\frac{p'}{N}}}\left( -\mu'(t)\mu(t)^{\frac{p}{N(p-1)}}+ \mu(t)^{\frac{p}{N(p-1)}}\int_{\partial U_{t}^{\textrm{ext}}} \frac{h}{|u|}\dha\right).
}
Finally, integrating from $0$ to $+\infty$, we have
\eq{}{
 \iom u\dx=\int_{0}^{+\infty}\mu(t)\,dt  \le \frac{1}{N^{p'}\omega_{N}^{\frac{p'}{N}}}\left(\frac{N(p-1)}{p+N(p-1)} |\O|^{\frac{p+N(p-1)}{N(p-1)}}+ |\O|^{\frac{p}{N(p-1)}}m\right).
}
The explicit solution provided in \eqref{eq:bsol} achieves equality in the inequality above.
}

\rem{}{Although the above proof is not valid for \( p =1 \) due to division by zero (e.g in equation \eqref{eq:pp}), we expect that a modified version of the argument can be developed to address that scenario as well.
}
\section{The eigenvalue problem}
The results in this section are related and intersect with the ones presented in \cite{pietra2}. 
\begin{theorem}
The minimization problem \eqref{eq:main2} admits a solution.
\end{theorem}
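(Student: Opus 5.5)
We follow the strategy of Theorem~\ref{thm:1}: exchange the order of minimization and remove $h$ explicitly. Fix $u\in\wpp$ with $u\not\equiv0$. The same Hölder argument used in the proof of Theorem~\ref{thm:1} shows that
\[
\min_{h\in\HH_m}\ih{\frac{|u|^p}{h^{p-1}}}=\frac{1}{m^{p-1}}\left(\ih{|u|}\right)^{p},
\]
with minimizer $\hat h=m|u|/\ih{|u|}$ whenever the trace of $u$ is not identically zero. If the trace vanishes, any $h\in\HH_m$ gives boundary energy $0$ under the extended-valued convention. In both cases the right-hand side is the infimum. Hence
\[
\inf_{h\in\HH_m}\inf_{u\ne0}J_h(u)=\inf_{u\in\wpp,\,u\ne0}\Lambda(u),\qquad
\Lambda(u)=\frac{\iom|Du|^p\dx+m^{1-p}\left(\ih{|u|}\right)^p}{\iom|u|^p\dx}.
\]
It therefore suffices to show that $\Lambda$ attains its infimum $\lambda^*$. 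Given a minimizer $u^*$, we then take $h^*=\hat h(u^*)$ if $u^*$ has nonzero trace, or any $h\in\HH_m$ otherwise. This gives $J_{h^*}(u^*)=\Lambda(u^*)=\lambda^*$, so the pair $(u^*,h^*)$ solves \eqref{eq:main2}. Since $\Lambda$ is $0$-homogeneous, the exchange of infima is legitimate with no further work.

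For the existence of a minimizer of $\Lambda$ we use the direct method. Take a minimizing sequence $u_n$ normalized by $\iom|u_n|^p\dx=1$. Then $\iom|Du_n|^p\dx\le\lambda^*+1$, so $(u_n)$ is bounded in $\wpp$. By reflexivity, the compact embedding $\wpp\hookrightarrow L^p(\O)$ (which holds since $\O$ is a bounded Lipschitz domain), and compactness of the trace $\wpp\to L^1(\bO)$, we may pass to a subsequence with
\[
u_n\wc u \text{ in }\wpp,\qquad u_n\to u \text{ in }L^p(\O),\qquad u_n\to u\text{ in }L^1(\bO).
\]
Strong $L^p$ convergence preserves the constraint, so $\iom|u|^p\dx=1$ and in particular $u\ne0$. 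This is the key point where nontriviality of the limit must be secured, and compactness of the embedding handles it.

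The remaining step is lower semicontinuity of the numerator. The gradient term is weakly lower semicontinuous because it is convex and continuous. The boundary term converges outright, since $\ih{|u_n|}\to\ih{|u|}$ by the $L^1(\bO)$ convergence of traces. Hence $\Lambda(u)\le\liminf\Lambda(u_n)=\lambda^*$, and $u$ is a minimizer.

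The main obstacle I expect is the careful justification of the reduction step when $h$ may vanish on part of $\bO$, that is, checking that the extended-valued convention does not allow a smaller value than $m^{1-p}(\ih{|u|})^p$. This follows from applying Hölder's inequality on the set $\{h>0\}$: on $\{h=0\}$ the integrand is either $+\infty$ or requires $u=0$, so it contributes nothing below this bound. A secondary point is that $\hat h$ is only measurable, not Lipschitz. This is acceptable because $\HH_m$ was defined to consist of measurable functions.
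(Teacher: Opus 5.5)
Your proof is correct and follows the paper's route: you eliminate $h$ by the H\"older argument of Theorem~\ref{thm:1}, reducing \eqref{eq:main2} to minimizing the $h$-free quotient, and then you apply the direct method. The paper only cites that step as ``standard arguments''; you carry it out explicitly (normalization, compact embedding into $L^p(\O)$, compact trace, lower semicontinuity), and you also treat the extended-valued case where $h$ vanishes. One small correction: exchanging the two infima needs no homogeneity, since it is simply the infimum over the product set; the $0$-homogeneity of $\Lambda$ is what justifies normalizing the minimizing sequence.
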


\begin{proof}
The proof of Theorem~\ref{thm:1} can be readily adapted to this setting. Specifically, the problem is equivalent to minimizing the functional
\[
J(u) = \frac{\displaystyle \int_\Omega \left| D u \right|^p \, dx + \frac{1}{m^{p-1}} \left(\ih{|u|} \right)^p}{\displaystyle \int_\Omega |u|^p \, dx},
\]
which admits a minimizer by standard arguments from the Calculus of Variations. On the other hand, since the functional \( J(u) \) is not strictly convex, uniqueness of the minimizer is not guaranteed.

\end{proof}
As discussed above, an interesting open problem is to analyze the minimization problem \eqref{eq:main2} when the Lipschitz domain \( \Omega \) is allowed to vary. 

Remarkably, the following non-existence result holds for certain values of \( p \) in this setting.

\begin{theorem}
If \( p' < N \), then the minimization problem \eqref{eq:main3} does not admit a solution.
\end{theorem}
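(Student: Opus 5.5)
The plan is to show that the infimum in \eqref{eq:main3} equals $0$, which no admissible triple can attain. We do this by exhibiting domains of volume at most $1$ on which the reduced functional
\[
J(u)=\frac{\int_\Omega |Du|^p\,dx+m^{1-p}\left(\int_{\partial\Omega}|u|\,d\HH^{N-1}\right)^p}{\int_\Omega|u|^p\,dx}
\]
can be made arbitrarily small. By the reduction in the proof of the previous theorem, namely the Hölder step from Theorem~\ref{thm:1} with optimal $\hat h=m|u|/\int_{\partial\Omega}|u|$, problem \eqref{eq:main3} is equivalent to minimizing $J$ over $u\in W^{1,p}(\Omega)$ and $|\Omega|\le1$.

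\textbf{Step 1: the value is positive for every fixed $\Omega$.} For fixed $\Omega$, Lemma~\ref{lem:poin} gives $\int_\Omega|u|^p\le C\big[\int|Du|^p+(\int_{\partial\Omega}|u|)^p\big]$. Hence $\min_u J>0$ for each admissible $\Omega$, and a minimizing triple would have to realize a strictly positive value.

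\textbf{Step 2: test with constants on balls.} Take $\Omega=B_r$ and $u\equiv1$. Then
\[
J(1)=\frac{m^{1-p}(N\omega_N r^{N-1})^p}{\omega_N r^N}=C\,r^{p(N-1)-N}.
\]
This tends to $0$ as $r\to0$ when $p(N-1)>N$, i.e. $p>N/(N-1)$, which is exactly $p'<N$. So small balls, which satisfy $|B_r|\le1$, drive the infimum to $0$. If needed, one could also use many disjoint small balls to keep the volume near $1$, but the constraint is only $|\Omega|\le 1$, so a single small ball suffices. (One might instead worry that the paper intends connected domains with $|\Omega|=1$. In that case I would connect $k$ tiny balls of total volume $1$ by thin tubes and test with $u\equiv1$. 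The boundary measure is then $\sim k^{1/N}$, while the ratio scales like $k^{p/N}\cdot$const, which grows; so that variant fails, and I stick with the small-ball argument, which matches the hypothesis $p'<N$ precisely.)

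\textbf{Step 3: conclude.} The infimum in \eqref{eq:main3} is $0$, and by Step 1 every admissible triple gives a positive value, so no minimizer exists. The main obstacle is making Step 1 rigorous for the extended-valued problem. Here I would check that the reduction $\min_h\int|u|^p h^{1-p}=m^{1-p}(\int|u|)^p$ stays valid when $h$ may vanish; it does, via Hölder and the convention $+\infty$. I would also check that Lemma~\ref{lem:poin} applies on each fixed Lipschitz $\Omega$, which it does, since its constant depends only on $\Omega$.
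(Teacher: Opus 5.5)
Your proposal is correct and follows the same route as the paper. You reduce to $J$ via the optimal $\hat h$, then test with $u\equiv 1$ on shrinking balls $B_r$ to get $N^p\omega_N^{p-1}m^{1-p}r^{p(N-1)-N}\to 0$, which happens exactly when $p'<N$, and conclude that the infimum $0$ is not attained. Your Step 1, which uses Lemma~\ref{lem:poin} to show that every admissible triple gives a strictly positive value, spells out a point the paper leaves implicit in its final sentence.
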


\begin{proof}
As before, the problem is equivalent to the minimization problem
\[
X = \inf \left\{ \frac{\displaystyle \int_\Omega |D u|^p\, dx + \frac{1}{m^{p-1}} \left( \int_{\partial\Omega} |u| \right)^p}{\displaystyle \int_\Omega |u|^p\, dx} : u \in W^{1,p}(\Omega) \setminus \{0\},\ |\Omega| \le 1 \right\}.
\]
Taking \( u \equiv 1 \) and considering the sequence \( \Omega_k = B_{1/k} \), we obtain
\[
X \le \frac{\left( \mathcal{H}^{N-1}(\partial B_{1/k}) \right)^p}{m^{p-1} |B_{1/k}|} 
= \frac{N^p \omega_N^{p - 1}}{m^{p - 1} k^{(N- 1)p - N}}.
\]
Letting \( k \to +\infty \), it follows that \( X = 0 \). Hence, the infimum is not attained by any open set \( \Omega \subset \mathbb{R}^N \), and the minimization problem \eqref{eq:main3} does not admit a solution.
\end{proof}
\rem{}{When \( p = 2 \),  we recover the result known in the linear case. Similarly, if \( p \ge 3 \), then \eqref{eq:main3} admits no solution for \( N \ge 2 \). On the other hand, when
\[
2 \le N \le  p',
\]
we believe that the arguments used in the proof of Theorem~\ref{thm:54} can be adapted to establish a Faber–Krahn-type isoperimetric inequality in this regime (see \cite{dai} for a related problem). In particular, we conjecture that the ball remains a minimizer in this setting as well. Complete proofs related to this Faber–Krahn-type isoperimetric inequality will be presented elsewhere.
}
\printbibliography
\end{document}